\title{{AROUND RATIONALITY OF INTEGRAL CYCLES}}
\date{11 March 2012}
\author{{Raphaël Fino}}
\address
{UPMC Sorbonne Universit\'es\\
Institut de Math\'ematiques de Jussieu\\
Paris\\
FRANCE}
\address
{{\it Web page:}
{\tt www.math.jussieu.fr/\~{ }fino}}
\email {fino {\it at} math.jussieu.fr}
\numberwithin{equation}{section}
\theoremstyle{definition}
\newtheorem{defi}[equation]{Definition}
\newtheorem{rem}[equation]{Remark}
\newtheorem{lemme}[equation]{Lemma}
\newtheorem{prop}[equation]{Proposition}
\newtheorem{thm}[equation]{Theorem}
\begin{document}

\begin{abstract}
In this article we prove a result comparing rationality of integral algebraic cycles over the function field of a quadric and over the base field. This is an integral version of the result known for $\mathbb{Z}/2\mathbb{Z}$-coefficients.
Those results have already been proved by Alexander Vishik in the case of characteristic $0$, which allowed him to work with algebraic cobordism theory. Our proofs use the modulo $2$ Steenrod operations in the Chow theory and work in any characteristic $\neq 2$. 
\end{abstract}

\maketitle

\medskip

\medskip

In many situations it can be important to know, if an element of the Chow group of some variety considered over an algebraic closure of its base field, is actually defined over the base field itself. 
In a previous paper (\cite{ARC1}), we already dealt with Chow groups modulo~$2$. In particular, we showed that for a Chow group modulo~$2$ element
of codimension $m$, it is sufficient to check that it is defined over the function field of a sufficiently large quadric $Q$
(of dimension $>2m$), to get that it is defined over the base field (up to an element of exponent $2$, see \cite[Theorem 1.1]{ARC1}).
Furthermore, we did this without the help of algebraic cobordism theory (it was the case in \cite{GPQ}) but with the only help of the Chow theory itself (including the \textit{Steenrod operations on Chow groups modulo $2$}). 
This allowed one to get a valid result in any characteristic different from $2$
(however, the use of \textit{symmetric operations} in algebraic cobordism theory by Alexander Vishik in \cite{GPQ} permitted him to
obtain a statement without an exponent $2$ element).

In this note, we prove an integral version of \cite[Theorem 1.1]{ARC1} (generalizing in a way \cite[Theorem 3.1]{RIC} to any characteristic different from $2$).
We work again with the only help of Steenrod operations on Chow groups modulo $2$, which is remarkable.
This integral version (Theorem 3.1), whose the statement is very close to the $\mathbb{Z}/2\mathbb{Z}$-coefficients case,
requires an additional condition on $Q$ (aside from its size) saying that $Q$ has a projective line
defined over the generic point of $Q$.

I would like to gratefully thank Nikita Karpenko for sharing his great knowledge and his valuable advice. 
This work could not have been done without his help.

\tableofcontents

\section{Preliminaries: decomposition of Chow groups}

In this paper, the word \textit{scheme} means a separated scheme
of finite type over a field and a \textit{variety} is an integral scheme. 
Let $F$ be a field of characteristic $\neq 2$ and $Y$ be an $F$-variety. 
We write $CH(Y)$ for the integral Chow group of $Y$ (see \cite[Chapter X]{EKM}) and we write $Ch(Y)$ for $CH(Y)$ modulo $2$.

\medskip

The main purpose of this section is to introduce the notion of \textit{coordinates} for a cycle $x \in CH(Q\times Y)$, 
where $Q$ is a smooth projective quadric over $F$ and $Y$ is a smooth $F$-variety. 
This notion will be useful during the proofs of Theorem 3.1 and Theorem 4.1.

\medskip

Let $Q$ be a smooth projective quadric over $F$ of dimension $n$ given by a quadratic form $\varphi$, and
let us set $i_0(Q):=i_0(\varphi)$, where $i_0(\varphi)$ is the Witt index of $\varphi$. 

\medskip

For $i=0,...,n$, let us denote as $h^i \in CH^i(Q)$ the $i$th power of the hyperplane section class
(note that for any $i$, the cycle $h^i$ is defined over the base field).
For $i<i_0(Q)$, let us denote as $l_i \in CH_i(Q)$ the class of an $i$-dimensional totally isotropic subspace of $\mathbb{P}(V)$,
where $V$ is the underlying vector space of $\varphi$.
For $i\leq [n/2]$, we still write $l_i \in CH_i(Q_{\overline{F}})$ for the class of an $i$-dimensional totally isotropic subspace of $\mathbb{P}(V_{\overline{F}})$, where $\overline{F}$ is an algebraic closure of $F$
(if $i<i_0(Q)$, the cycle $l_i \in CH_i(Q_{\overline{F}})$ is the image of $l_i \in CH_i(Q)$
under the change of field homomorphism $CH(Q)\rightarrow CH(Q_{\overline{F}})$).
Let us notice that for $i<[n/2]$, the cycle $l_i$ (in $CH_i(Q_{\overline{F}})$ or in $CH_i(Q)$ if $i<i_0(Q)$)
is canonical by \cite[Proposition 68.2]{EKM} (in case of even $n$, there are two classes
of $n/2$-dimensional totally isotropic subspaces and we fix one of the two).

\medskip

Let $x$ be an element of $CH^r(Q\times Y)$.
We write $pr$ for the projection $Q\times Y \rightarrow Y$.
For every $i=0,...,i_0(Q)-1$, we have the following homomorphisms
\[\begin{array}{rcl}
CH^r(Q\times Y)  & \longrightarrow & CH^{r-i}(Y) \\
x & \longmapsto & pr_{\ast}(l_i\cdot x)=:x^i
   \end{array},\]
and 
\[\begin{array}{rcl}
CH^r(Q\times Y)  & \longrightarrow & CH^{r-n+i}(Y) \\
x & \longmapsto & pr_{\ast}(h^i\cdot x)=:x_i
   \end{array}.\]
   
\begin{defi}
The cycle $x^i\in CH^{r-i}(Y)$ is called the \textit{coordinate of $x$ on $h^i$} while  
$x_i\in CH^{r-n+i}(Y)$ is called the \textit{coordinate of $x$ on $l_i$}.
\end{defi}

Note that if $r<[n/2]$, for any $i=0,...,i_0(Q)-1$, one has $x_i=0$ by dimensional reasons.

\begin{rem}
For any nonnegative integer $k<i_0(Q)$, let us set $x(k):=x-\sum_{i=0}^k h^i\times x^i -\sum_{i=0}^k l_i\times x_i$.
Note that for any $i=0,...,k$, the coordinate of $x(k)$ on $h^i$ (as well as its coordinate on $l_i$) is $0$.
The writing 
\[x=x(k)+\sum_{i=0}^k h^i\times x^i+\sum_{i=0}^k l_i\times x_i\]
is called a \textit{decomposition} of $x$.

Assume now that $r<i_0(Q)$ and $r\leq k$. Then, by \cite[Theorem 66.2]{EKM}, one can write 
\[x(k)=\sum_{i=0}^r h^i\times w^i\]
with some $w^i\in CH^{r-i}(Y)$.
Since, for any $i=0,...,r$, the cycle $w^i$ coincides with the coordinate $x(k)^i$ of $x(k)$ on $h^i$,
we get that $x(k)=0$.
\end{rem}

Recall that one says that the quadric $Q$ is \textit{completly split} if $i_0(Q)=[n/2]+1$. 

\begin{rem}
Assume that $Y=Q$, $r=n$, and that $k<[n/2]$ (what is the case if the quadric $Q$ is not completly split).
Let $x$ be an element of $CH^{n}(Q\times Q)$.
Since, for $i=0,...,k$, the group $CH^{n-i}(Q)$ is free with basis $\{l_i\}$ (because $i<[n/2]$, see \cite[\S 68]{EKM}),
one can uniquely write 
\[x= x(k)+\sum_{i=0}^{k}b_i \cdot h^i\times l_i+\sum_{i=0}^{k}l_i\times x_i,\]
with some $b_i\in \mathbb{Z}$.
\end{rem}

Note that everything in Section~1 holds for Chow groups modulo $2$ in place of the integral Chow groups.

\section{Preliminaries: Steenrod operations and correspondences}
In this section we continue to use notation introduced in the beginning of Section~1.

\medskip

The Steenrod operations are the main tool of this note. We refer to \cite[Chapter XI]{EKM} for an introduction to the subject.
We just recall here that for a smooth scheme $X$ over a field $F$ (of characteristic $\ne2$), there is a certain homomorphism
$S_X:CH(X)\rightarrow CH(X)$ called the \textit{total Steenrod operation on $X$ of cohomological type}. 
Although $S_X$ is constructed only for quasi-projective $X$ in \cite{EKM},
Patrick Brosnan has extended the operation $S_X$ to any scheme $X$ in \cite[\S 10]{Bros}. In particular, the following proposition
holds for any smooth scheme (not necessarily quasi-projective).
This allows us to get rid of the assumption of quasi-projectivity for the main theorem of this note (Theorem 3.1)
(Alexander Vishik needed that assumption in \cite{RIC} because the algebraic cobordism theory is defined on the category
of smooth quasi-projective schemes over $F$, see \cite{AC}).

\medskip

In the following proposition, whose the statement and the proof are very close to \cite[Lemma 3.1]{RPSO},
 we focus on how the Steenrod operations interact with the composition of correspondences
(correspondences are defined in \cite[\S 62]{EKM}).
This will be useful during the proof of Theorem 3.1.

\medskip

Let $X_1$, $X_2$, $X_3$ be smooth schemes over $F$ (of characteristic $\neq 2$), and assume that $X_2$ is complete
(so the push-forward associated with the projection $X_1 \times X_2 \times X_3 \longrightarrow X_1 \times X_3$ is well defined).

\begin{prop}
\textit{For any correspondence $\alpha \in CH(X_1 \times X_2)$ and for any correspondence $\beta \in CH(X_2 \times X_3)$, one has}

\medskip
 
\textit{1) $S_{X_1\times X_3}(\beta \circ \alpha)=(S_{X_2\times X_3}(\beta)\cdot c(-T_{X_2}))\circ S_{X_1\times X_2}(\alpha);$}

\medskip

\textit{2) $S_{X_1\times X_3}(\beta \circ \alpha)=S_{X_2\times X_3}(\beta)\circ (S_{X_1\times X_2}(\alpha)\cdot c(-T_{X_2})),$}

\medskip

\textit{where $T_{X_2}$ is the tangent bundle of $X_2$ and $c$ is the total Chern class.}
\end{prop}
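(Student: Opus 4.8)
The plan is to reduce both identities to the known behaviour of the total Steenrod operation under pull-back, push-forward, and products, as recorded in \cite[Chapter XI]{EKM}. Recall that the composition $\beta\circ\alpha$ of correspondences $\alpha\in CH(X_1\times X_2)$ and $\beta\in CH(X_2\times X_3)$ is defined as
\[
\beta\circ\alpha = (p_{13})_{\ast}\bigl(p_{12}^{\ast}(\alpha)\cdot p_{23}^{\ast}(\beta)\bigr),
\]
where $p_{12}, p_{23}, p_{13}$ are the projections from $X_1\times X_2\times X_3$ onto the indicated factors, and $p_{13}$ is proper because $X_2$ is complete. The whole argument consists in applying $S_{X_1\times X_3}$ to this expression and moving it inside each operation using the three fundamental compatibilities: $S$ commutes with flat (indeed arbitrary) pull-backs, $S$ is multiplicative with respect to the intersection product, and $S$ satisfies a Riemann--Roch type formula for proper push-forwards.

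**First I would** treat the pull-back and product factors, which are the easy part. Since $p_{12}$ and $p_{23}$ are smooth projections (hence flat), one has
\[
S_{X_1\times X_2\times X_3}\bigl(p_{12}^{\ast}(\alpha)\bigr)=p_{12}^{\ast}\bigl(S_{X_1\times X_2}(\alpha)\bigr),
\qquad
S_{X_1\times X_2\times X_3}\bigl(p_{23}^{\ast}(\beta)\bigr)=p_{23}^{\ast}\bigl(S_{X_2\times X_3}(\beta)\bigr),
\]
and multiplicativity of $S$ gives
\[
S_{X_1\times X_2\times X_3}\bigl(p_{12}^{\ast}(\alpha)\cdot p_{23}^{\ast}(\beta)\bigr)
=p_{12}^{\ast}\bigl(S_{X_1\times X_2}(\alpha)\bigr)\cdot p_{23}^{\ast}\bigl(S_{X_2\times X_3}(\beta)\bigr).
\]

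**The main obstacle**, and the step requiring real care, is the proper push-forward along $p_{13}$, where the Chern-class correction term $c(-T_{X_2})$ is produced. The relevant tool is the formula relating $S$ and push-forward, which for a proper morphism $f$ reads $S(f_{\ast}(z))=f_{\ast}\bigl(c(-T_f)\cdot S(z)\bigr)$, with $T_f$ the relative tangent bundle. Here $p_{13}$ is the projection $X_1\times X_2\times X_3\to X_1\times X_3$, whose relative tangent bundle is the pull-back of $T_{X_2}$ from the middle factor; thus $c(-T_{p_{13}})=q^{\ast}c(-T_{X_2})$, where $q$ denotes the projection onto $X_2$. Applying this to $z=p_{12}^{\ast}(\alpha)\cdot p_{23}^{\ast}(\beta)$ yields
\[
S_{X_1\times X_3}(\beta\circ\alpha)
=(p_{13})_{\ast}\Bigl(q^{\ast}c(-T_{X_2})\cdot p_{12}^{\ast}\bigl(S_{X_1\times X_2}(\alpha)\bigr)\cdot p_{23}^{\ast}\bigl(S_{X_2\times X_3}(\beta)\bigr)\Bigr).
\]

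**To finish** I would absorb the factor $q^{\ast}c(-T_{X_2})$ into one of the two correspondences, using that $q$ factors through $p_{23}$ (projection onto $X_2\times X_3$) as well as through $p_{12}$ (projection onto $X_1\times X_2$), and the projection formula. Pushing $q^{\ast}c(-T_{X_2})$ into the $p_{23}^{\ast}$-factor gives $p_{23}^{\ast}\bigl(S_{X_2\times X_3}(\beta)\cdot c(-T_{X_2})\bigr)$ and hence identity 1), upon recognizing the resulting expression as the composition $\bigl(S_{X_2\times X_3}(\beta)\cdot c(-T_{X_2})\bigr)\circ S_{X_1\times X_2}(\alpha)$. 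Symmetrically, absorbing the correction into the $p_{12}^{\ast}$-factor produces identity 2). The only subtlety is to match the Chern-class factor on $X_2\times X_3$ (resp.\ $X_1\times X_2$) with $c(-T_{X_2})$ living on the appropriate factor, which is immediate since $c(-T_{X_2})$ is pulled back from $X_2$ and the multiplication in the definition of $\circ$ takes place after pull-back to the triple product.
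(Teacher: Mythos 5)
Your proposal is correct and follows essentially the same route as the paper: write $\beta\circ\alpha=(p_{13})_\ast(p_{12}^\ast(\alpha)\cdot p_{23}^\ast(\beta))$, apply the push-forward/Steenrod compatibility (\cite[Proposition 61.10]{EKM}) along $p_{13}$ with the relative tangent bundle pulled back from $X_2$, commute $S$ with pull-backs and products, and then absorb the factor $c(-T_{X_2})$ into either the $p_{23}^\ast$-term or the $p_{12}^\ast$-term via the projection formula. The only difference is the order in which you apply the compatibilities, which is immaterial.
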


\begin{proof}
For any $i,j \in \{1,2,3\}$ such that $i<j$, let us write $p_{ij}$ for the projection
\[X_1 \times X_2 \times X_3 \longrightarrow X_i \times X_j.\]

According to the composition rules of correspondences described in \cite[\S 62]{EKM}, we have
\[\beta \circ \alpha = {p_{13}}_{\ast}({p_{12}}^{\ast}(\alpha)\cdot {p_{23}}^{\ast}(\beta)).\]

Therefore, by \cite[Proposition 61.10]{EKM} applied to $p_{13}$, we get
\[S_{X_1\times X_3}(\beta \circ \alpha)=
{p_{13}}_{\ast}(S_{X_1 \times X_2 \times X_3}({p_{12}}^{\ast}(\alpha)\cdot {p_{23}}^{\ast}(\beta))\cdot {p_{12}}^{\ast}({pr_2}^{\ast}(c(-T_{X_2})))),\]
and since $S$ conmmutes with the products and the pull-backs, we get
\[S_{X_1\times X_3}(\beta \circ \alpha)=
{p_{13}}_{\ast}({p_{12}}^{\ast}(S_{X_1 \times X_2}(\alpha))\cdot {p_{23}}^{\ast}(S_{ X_2\times X_3}(\beta))\cdot ([X_1]\times c(-T_{X_2})\times [X_3] )),\]
this gives, on the one hand 
\[S_{X_1\times X_3}(\beta \circ \alpha)=
{p_{13}}_{\ast}({p_{12}}^{\ast}(S_{X_1 \times X_2}(\alpha))\cdot {p_{23}}^{\ast}(S_{ X_2\times X_3}(\beta)\cdot c(-T_{X_2}))),\]
thus 1) is proved, and on the other hand, this gives
\[S_{X_1\times X_3}(\beta \circ \alpha)=
{p_{13}}_{\ast}({p_{12}}^{\ast}(S_{X_1 \times X_2}(\alpha)\cdot c(-T_{X_2}))\cdot {p_{23}}^{\ast}(S_{ X_2\times X_3}(\beta))),\]
thus 2) is proved.
\end{proof}

\section{Main theorem}
In this section we continue to use notation introduced in the beginnings of Sections~1 and 2.

\medskip

Let $F$ be a field of characteristic $\neq 2$ and let $Y$ be a smooth $F$-variety.
We write $\overline{Y}:=Y_{\overline{F}}$ where $\overline{F}$ is an algebraic closure of $F$.
Let $X$ be a geometrically integral variety over $F$. An element $\overline{y}$ of $CH(\overline{Y})$ (or of $Ch(\overline{Y})$)
is \textit{$F(X)$-rational} if its image $\overline{y}_{\overline{F}(X)}$ under $CH(\overline{Y})\rightarrow CH(Y_{\overline{F}(X)})$
(resp. $Ch(\overline{Y})\rightarrow Ch(Y_{\overline{F}(X)})$) is in the image of $CH(Y_{F(X)})\rightarrow CH(Y_{\overline{F}(X)})$ 
(resp. $Ch(Y_{F(X)})\rightarrow Ch(Y_{\overline{F}(X)})$). An element $\overline{y}$ of $CH(\overline{Y})$ (or of $Ch(\overline{Y})$)
is called \textit{rational} if it belongs to the subgroup $\overline{CH}(Y):=$Im$\left( CH(Y)\rightarrow CH(\overline{Y}) \right)$ 
(resp. $\overline{Ch}(Y)$).
Note that since $\overline{F}$ is algebraically closed, the homomorphism $CH(\overline{Y})\rightarrow CH(Y_{\overline{F}(X)})$
(as well as the homomorphism $Ch(\overline{Y})\rightarrow Ch(Y_{\overline{F}(X)})$) is injective by the specialization arguments.

\medskip

Let $Q$ be a smooth projective quadric over $F$ of positive dimension $n$
(in that case, $Q$ is geometrically integral) given by a quadratic form $\varphi$.
Since for isotropic $Q$, any $F(Q)$-rational element (in any codimension) is rational, we make the assumption
that the quadric $Q$ is anisotropic. In particular, $Q$ is not completly split and 
one can consider the first Witt index $i_1(\varphi)$ of $\varphi$, which we simply denote as $i_1$.

\medskip

In a way, the following result is a generalization of \cite[Theorem 3.1]{RIC}. Indeed, the use of the Steenrod operations on the modulo $2$ Chow groups allows one to obtain a valid result in any characteristic different from $2$. Nevertheless, an exponent $2$ element appears in our conclusion while it is not the case in \cite[Theorem 3.1]{RIC}.
The proof is inspired by the proof of \cite[Theorem 3.1]{RIC}.

\begin{thm}
\textit{Assume that $m<[n/2]$ and $i_1>1$. Then any $F(Q)$-rational element of $CH^m(\overline{Y})$ is the sum of a rational element and an 
exponent $2$ element.}
\end{thm}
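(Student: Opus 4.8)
The plan is to prove the statement by transferring the rationality information along a well-chosen correspondence and then using the Steenrod operations to ``descend'' from the function field $F(Q)$ to the base field, picking up an exponent $2$ element in the process. Let $\overline{y}\in CH^m(\overline{Y})$ be $F(Q)$-rational. The first step is to express this hypothesis cycle-theoretically: the $F(Q)$-rationality of $\overline{y}$ should be encoded by the existence of a cycle $x\in CH^m(Q\times Y)$ whose ``generic coordinate'' recovers $\overline{y}$ over $\overline{F(Q)}$. Concretely, I would look at the fiber of $Q\times Y$ over the generic point of $Q$, which is $Y_{F(Q)}$, and use the localization sequence together with the hypothesis to produce such an $x$ over $F$ whose restriction is $\overline{y}_{\overline{F}(Q)}$. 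This is where the assumption $i_1>1$ and the extra geometric condition (that $Q$ has a line over its generic point) will enter: they guarantee enough splitting behaviour over $F(Q)$ to lift the rational representative of $\overline{y}$ to an honest cycle on the product.

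Next I would analyze the coordinates of $x$ in the sense of Definition~1.5 and the decomposition in Remark~1.6. Since $m<[n/2]$, the coordinates $x_i$ on the $l_i$ vanish by dimensional reasons, so over $\overline{F}$ one can write $\overline{x}=\sum_{i} h^i\times x^i + (\text{error terms})$, and the coordinate $\overline{y}$ appears essentially as $x^0=pr_*(l_0\cdot x)$ evaluated appropriately. The rational part of the conclusion will come from the coordinates $x^i$ that are automatically defined over $F$, while the obstruction to $\overline{y}$ itself being rational is measured by the ``remainder'' $x(k)$ of the decomposition, which is not a priori rational.

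The heart of the argument is to apply the total Steenrod operation $S$ to $x$ (or to the correspondence $x$ viewed in $CH(Q\times Y)$) and to exploit Proposition~2.1, which controls how $S$ interacts with composition of correspondences and with the tangent bundle of $Q$ via the factor $c(-T_Q)$. The point is that $S$ modulo $2$ is computable on the basis elements $h^i$ and $l_i$ of $CH(\overline{Q})$, so applying $S$ and projecting onto a suitable coordinate produces a new relation among the $x^i$ modulo $2$. Combined with the $F(Q)$-rationality hypothesis, this relation forces the non-rational remainder to become rational after multiplication by $2$ — which is exactly the assertion that $\overline{y}$ is a rational element plus an exponent $2$ element. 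I expect the main obstacle to be the bookkeeping in this Steenrod step: one must compute $S(h^i)$ and $S(l_i)$ precisely (using $S(h)=h+h^2$ and the analogous formula for $l_i$, together with the Chern class $c(-T_Q)$ of the quadric), track which terms survive modulo $2$, and verify that the comparison between the $F$-rational coordinates and the $F(Q)$-rational input degenerates in a way that only leaves $2$-divisibility as the defect. The inequality $m<[n/2]$ and the condition $i_1>1$ are precisely what is needed to keep all the relevant coordinates in the ``free'' range where these Steenrod computations are clean and where the line over $F(Q)$ provides the geometric input that makes the descent work without further error terms.
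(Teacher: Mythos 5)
Your sketch correctly identifies the opening moves (lift $\overline{y}$ to $y\in CH^m(Y_{F(Q)})$ and then to a cycle $x\in CH^m(Q\times Y)$ via the surjection $CH^m(Q\times Y)\twoheadrightarrow CH^m(Y_{F(Q)})$, decompose $\overline{x}=\sum h^i\times x^i$ with $x^0=\overline{y}$), but after that it stops at the level of intentions and misses the two ideas that actually carry the proof. First, you misplace where $i_1>1$ enters: it is not needed to produce $x$ (that lifting is unconditional, by continuity/localization), and the ``line over the generic point'' is not an extra condition --- it is exactly the condition $i_1>1$. Its real role is to feed \cite[Proposition 83.2]{EKM}: one takes the $1$-primordial cycle $\pi\in\overline{Ch}_{n+i_1-1}(Q^2)$ and multiplies by $h^0\times h^{i_1-1}$ to obtain a \emph{rational} degree-$0$ correspondence $\gamma\in CH_n(Q^2)$ whose coefficient $\alpha_i$ on $h^i\times l_i$ is odd for $i=0$ and even for all odd $i$. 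Composing $x\circ\gamma$ and restricting to an $m$-dimensional subquadric produces $\overline{z}=\sum\alpha_i\,h^i\times x^i$, and it is this parity control on the $\alpha_i$ --- not any rationality of the $x^i$ themselves, which is false in general --- that makes the Steenrod bookkeeping close up. Applying $S$ directly to $x$, as you propose, gives no usable relation because you have no rational correspondence against which to compare.

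Second, even granting the correspondence step, the Steenrod operations do not directly ``force the remainder to become rational after multiplication by $2$.'' What they produce (via Proposition 2.1, the vanishing $S^{m'}=0$ on classes of codimension $<m'$, and the Chern classes $c_i(-T_{P'})\equiv\binom{m'+i+1}{i}h^i$) is a family of relations asserting that, for each polynomial $f$ of degree $\le[m/2]$, the combination $\sum_j g_{m-j}v^j$ is rational plus exponent $2$, where $g=f\cdot\eta$ and $\eta(X)=\sum_l(-1)^l\binom{2l+1}{l}X^l$, and where $v^0$ is an odd multiple of $\overline{y}$, $2v^j$ is always rational, and $v^j$ is rational for odd $j$. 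Extracting the conclusion for $\overline{y}$ itself then requires a genuinely nontrivial combinatorial input --- the existence (Vishik's \cite[Lemma 3.13]{RIC}) of an $f$ such that $f\cdot\eta$ has odd coefficient at $X^m$ and even coefficients at all smaller monomials of the same parity. Your sketch contains no trace of this final step, and without it the argument does not isolate $v^0$ from the other $v^j$ of even index, whose non-rationality is exactly the obstruction you need to cancel.
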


\begin{proof}
The statement being trivial for negative $m$, we may assume that $m\geq 0$ in the proof. Let $\overline{y}$ be an $F(Q)$-rational element of $CH^m(\overline{Y})$. 
Since the quadric $Q$ is isotropic over $\overline{F}$, the homomorphism $CH(\overline{Y})\rightarrow CH(Y_{\overline{F}(Q)})$
is surjective and is consequently an isomorphism.
The element $\overline{y}\in CH^m(\overline{Y})$ being $F(Q)$-rational, there exists $y \in CH^m(Y_{F(Q)})$ mapped to $\overline{y}$ under the homomorphism  
\[CH^m(Y_{F(Q)})\rightarrow CH^m(Y_{\overline{F}(Q)})\xrightarrow{\sim} CH^m(\overline{Y}).\]

Let us fix an element $x \in CH^m(Q\times Y)$ mapped to $y$ under the surjection (see \cite[Corollary 57.11]{EKM})
\[CH^m(Q\times Y)\twoheadrightarrow CH^m(Y_{F(Q)}).\] 
Since over $\overline{F}$ the quadric $Q$ becomes completly split and $m<[n/2]$, by Remark 1.2 (applied with $r=k=m$), the image $\overline{x}\in CH^m(\overline{Q}\times \overline{Y})$ 
of $x$ decomposes as
\begin{equation} \overline{x}=\sum_{i=0}^m h^i \times x^i \end{equation}
where $x^i \in CH^{m-i}(\overline{Y})$ is the coordinate of $\overline{x}$ on $h^i$ (see Definition 1.1), and where $x^0=\overline{y}$
by \cite[Lemma 3.2]{GPQ}.

\medskip

Let $\pi \in \overline{Ch}_{n+i_1-1}(Q^2)$ be the $1$-primordial cycle (see \cite[Definition 73.16]{EKM} and paragraph right after 
\cite[Theorem 73.26]{EKM}).
Since $i_1>1$, by \cite[Proposition 83.2]{EKM}, we get that the cycle $(h^0\times h^{i_1-1})\cdot \pi \in \overline{Ch}_{n}(Q^2)$ decomposes as
\begin{equation}(h^0\times h^{i_1-1})\cdot \pi=\sum_{p=0}^r \varepsilon_p (h^{2p}\times l_{2p})
+\sum_{p=0}^r \varepsilon_p (l_{2p+i_1-1}\times h^{2p+i_1-1}),\end{equation}
where $\varepsilon_p \in \{0,1\}$, $\varepsilon_0=1$, and $r=[\frac{d-i_1+1}{2}]$ with $d=[\frac{n}{2}]$. 
Thus, one can choose a rational integral representative $\overline{\gamma} \in \overline{CH}_n(Q^2)$ of $(h^0\times h^{i_1-1})\cdot \pi$
such that $\overline{\gamma}$ decomposes as 
\begin{equation}\overline{\gamma}=\sum_{i=0}^{[\frac{n}{2}]}\alpha_i(h^i\times l_i)+\sum_{i=0}^{[\frac{n}{2}]}\beta_i(l_i\times h^i)
+\delta(l_{[\frac{n}{2}]}\times l_{[\frac{n}{2}]}),
\end{equation}
with some integers $\alpha_i$, $\beta_i$ and $\delta$, where $\alpha_i$ is even for all odd $i$ and $\alpha_0$ is odd.

The element $\overline{\gamma}$ being rational, there exists $\gamma \in CH_n(Q^2)$ mapped to $\overline{\gamma}$ under the restriction homomorphism $CH_n(Q^2)\rightarrow CH_n(\overline{Q}^2)$. The cycles $\gamma$ and $\overline{\gamma}$ are considered here as correspondences of degree $0$.

\begin{lemme}
\textit{For any $i=0,...,m$, one can choose a rational integral representative $s^i\in CH^{m+i}(\overline{Q}\times \overline{Y})$ of $S^i((\overline{x}\;mod\;2)\circ (\overline{\gamma}\;mod\;2))$ such that}

\medskip

\textit{1) for any $0\leq j \leq m$, $2s^{i,j}$ is rational , where $s^{i,j}\in CH^{m+i-j}(\overline{Y})$ is the coordinate of $s^i$ on $h^j$;}

\medskip

\textit{2) for any odd $0\leq j \leq m$, $s^{i,j}$ is rational.}
\end{lemme}

\begin{proof}
First of all, since $m<[n/2]$, for any $j=0,...,m$, one has $h^{n-j}=2l_j$ (see \cite[\S 68]{EKM}).
Therefore, for any rational cycle $s\in CH(\overline{Q}\times \overline{Y})$, the element 
$2pr_{\ast}(l_j\cdot s)$ (where $pr$ is the projection $Q\times Y \rightarrow Y$) is rational and 1) is proved.

\medskip

Assume now that $j$ is odd. 
By Proposition 2.1 1), for any $i=0,...,m$, one has
\begin{equation}
S^i((\overline{x}\;mod\;2)\circ (\overline{\gamma}\;mod\;2))=\sum_{k=0}^m \sum_{t=0}^m (S^t(\overline{x}\;mod\;2)\cdot c_{i-k-t}(-T_Q))
\circ S^k(\overline{\gamma}\;mod\;2).
\end{equation} 

For every $k=0,...,m$, let $\tilde{a}^k \in CH^{n+k}(\overline{Q}\times \overline{Q})$ be a rational integral representative of 
$S^k(\overline{\gamma}\;mod\;2) \in Ch^{n+k}(\overline{Q}\times \overline{Q})$. 
We write $\tilde{a}^{k,j}\in CH^{n+k-j}(\overline{Q})$ for the coordinate of $\tilde{a}^k$ on $h^j$.
For every $k=0,...,m$ and every $t=0,...,m$, we choose  a rational integral
representative $d_{k,t} \in CH^{m+i-k}(\overline{Q}\times \overline{Y})$ of $S^t(\overline{x}\;mod\;2)\cdot c_{i-k-t}(-T_Q)\in Ch^{m+i-k}(\overline{Q}\times \overline{Y})$. Thus, by the equation (3.6), the cycle 
\[s^i:=\sum_{k=0}^m \sum_{t=0}^m d_{k,t}\circ \tilde{a}^k \in CH^{m+i}(\overline{Q}\times \overline{Y})\]
is a rational integral representative of $S^i((\overline{x}\;mod\;2)\circ (\overline{\gamma}\;mod\;2))$.

\medskip

Moreover, for any $0\leq k \leq m$, one has by (3.3)
\[S^k(\overline{\gamma}\;mod\;2)=\sum_{p=0}^r \varepsilon_p S^k(h^{2p}\times l_{2p})
+\sum_{p=0}^r \varepsilon_p S^k(l_{i_1-1+2p}\times h^{i_1-1+2p}).\]

Therefore, for any $0\leq k \leq m$, denoting as $a^{k,j}\in Ch^{n+k-j}(\overline{Q})$ the coordinate of $S^k(\overline{\gamma}\;mod\;2)$
on $h^j$, we have
\[a^{k,j}=\sum_{(p,t)\in \mathcal{E}_{k,j}} \varepsilon_p {2p \choose t} S^{k-t}(l_{2p})\]
where $\mathcal{E}_{k,j}=\{(p,t)\in \llbracket 0,r  \rrbracket \times \llbracket  0,k  \rrbracket \;| 2p+t=j\}$.

Furthermore, since $j$ is odd, for any $(p,t)\in \mathcal{E}_{k,j}$, the binomial coefficient ${2p \choose t}$ is even.
Therefore, for any $0\leq k \leq m$, we have $a^{k,j}=0$ and, consequently, the cycle $\tilde{a}^{k,j}\in CH^{n+k-j}(\overline{Q})$ is divisible by $2$.  
Since $j-k<[n/2]$, the group $CH^{n+k-j}(\overline{Q})$ is generated by $l_{j-k}$ and $2l_{j-k}=h^{n+k-j}$ (see \cite[\S 68]{EKM}).
Hence, for any $0\leq k \leq m$, the cycle $\tilde{a}^{k,j}$ is rational.

\medskip

According to the composition rules of correspondences described in \cite[\S 62]{EKM}, we have the identity
\[h^j\times s^{i,j}=\sum_{k=0}^m \sum_{t=0}^m d_{k,t}\circ (h^j \times \tilde{a}^{k,j})=
\sum_{k=0}^m \sum_{t=0}^mh^j\times pr_{\ast}(\tilde{a}^{k,j}\cdot d_{k,t}).\]
Therefore, since for any $0\leq k\leq m$ and for any $0\leq t\leq m$, the cycles $\tilde{a}^{k,j}$ and $d_{k,t}$ are rational, we get that $s^{i,j}$ is rational and 2) is proved.
\end{proof}

Furthermore, we fix a smooth subquadric $P$ of $Q$ of dimension $m$; we write $in$ for the imbedding 
\[(P\hookrightarrow Q)\times id_Y: P\times Y \hookrightarrow Q\times Y.\] 

\medskip

Then, considering $x$ as a correspondence, we set
\[z:=in^{\ast}(x\circ \gamma)\in CH^m(P \times Y).\]

According to the composition rules of correspondences described in \cite[\S 62]{EKM} and in view of decompositions (3.2) and (3.4),
we get that the image $\overline{z}\in CH^m(\overline{P} \times \overline{Y})$ of $z$ can be written as
\[\overline{z}=\sum_{i=0}^m \alpha_i\cdot h^{i}\times x^i\]
(we recall that the integer $\alpha_i$ is even for all odd $i$ and that $\alpha_0$ is odd). For every $i=0,...,m$, we set $z^i:=\alpha_i\cdot x^{i} \in CH^{m-i}(\overline{Y}).$

\medskip

Note that since the Steenrod operations of cohomological type commute with $in^{\ast}$ (see \cite[Theorem 61.9]{EKM}), 
for every $i=0,...,m$, the cycle $in^{\ast}(s^i)\in CH^{m+i}(\overline{P}\times \overline{Y})$ (with $s^i$ as in Lemma 3.5)
is a rational integral representative of $S^i(\overline{z}\;mod\;2)\in Ch^{m+i}(\overline{P}\times \overline{Y})$.

\begin{lemme}
\textit{For any $[(m+1)/2]\leq m'\leq m$, the cycle
\[\sum_{i=0}^{m'} {m'+i+1 \choose i} s^{m'-i,m'-i}\in CH^m(\overline{Y})\]
is the sum of a rational element $\overline{\delta_{m'}}$ and an exponent $2$ element.}
\end{lemme}

\begin{proof}
For any $[(m+1)/2]\leq m'\leq m$, we can fix a smooth subquadric $P'$ of $P$ of dimension $m'$; we write $in_{m'}$
for the imbedding 
\[(P'\hookrightarrow P)\times id_Y: P'\times Y \hookrightarrow P\times Y.\]
By \cite[Lemma 1.2]{ARC1}, one has
\[S^{m'} {pr_{m'}}_{\ast}{in_{m'}}^{\ast}(z\;mod\;2)=\sum_{i=0}^{m'} {pr_{m'}}_{\ast}(c_i(-T_{P'})\cdot {in_{m'}}^{\ast}S^{m'-i}(z\;mod\;2))\;\;\;\;\;\text{in}\;\;Ch^{m}(Y)\]
(where $T_{P'}$ is the tangent bundle of $P'$, $c_i$ are the Chern classes, and $pr_{m'}$ is the projection 
$P'\times Y \rightarrow Y$).

\medskip

If $m'\geq [(m+1)/2]+1$, since ${pr_{m'}}_{\ast}{in_{m'}}^{\ast}(z\;mod\;2) \in Ch^{m-m'}(Y)$ and $m-m'<m'$, we have $S^{m'} {pr_{m'}}_{\ast}{in_{m'}}^{\ast}(z\;mod\;2)=0$.
Therefore, we get 
\[\sum_{i=0}^{m'} {pr_{m'}}_{\ast}(c_i(-T_{P'})\cdot {in_{m'}}^{\ast}S^{m'-i}(z\;mod\;2))=0\;\;\;\;\;\text{in}\;\;Ch^{m}(Y).\]
Furthermore, by \cite[Lemma 78.1]{EKM}, for any $i=0,...,m'$, one has $c_i(-T_{P'})\equiv {m'+i+1 \choose i}h^i\;(mod\;2)$.
We deduce that the cycle 
\[\sum_{i=0}^{m'}{m'+i+1 \choose i} {pr_{m'}}_{\ast}(h^{i}\cdot {in_{m'}}^{\ast}in^{\ast}(s^{m'-i}))\]
is twice a rational element $\overline{\delta_{m'}} \in CH^m(\overline{Y})$.
Since, by the projection formula (\cite[Proposition 56.9]{EKM}), for any $i=0,...,m'$, one has ${pr_{m'}}_{\ast}(h^{i}\cdot {in_{m'}}^{\ast}in^{\ast}(s^{m'-i}))
=pr_{\ast}(h^{n-m'+i}\cdot s^{m'-i})=2s^{m'-i,m'-i}$, we are done with the case $m'\geq [(m+1)/2]+1$.

\medskip

If $m'=[(m+1)/2]$ and $m$ is odd, we still have $m-m'< m'$ and we can do the same reasoning as in the first case.
If $m'=[(m+1)/2]$ and $m$ is even, we have $m-m'=m'=m/2$, and in this case, we have
\[S^{m/2} {pr_{m/2}}_{\ast}{in_{m/2}}^{\ast}(z\;mod\;2)=({pr_{m/2}}_{\ast}{in_{m/2}}^{\ast}(z\;mod\;2))^2.\]
Therefore, by the same reasoning as in the first case, there exists $\delta_{m/2}\in CH^m(Y)$ such that
\[2\sum_{i=0}^{m/2}{\frac{m}{2}+i+1 \choose i} s^{\frac{m}{2}-i,\frac{m}{2}-i}=2\overline{\delta_{m/2}}
+({pr_{m/2}}_{\ast}{in_{m/2}}^{\ast}(\overline{z}))^2.\] 
Moreover, we have
\[({pr_{m/2}}_{\ast}{in_{m/2}}^{\ast}(\overline{z}))^2=(2z^{\frac{m}{2}})^2=2\cdot (2{z^{\frac{m}{2}}}^2),\]
and since for any $i=0,...,m$, the cycle $2z^i={pr_m}_{\ast}(h^{m-i}\cdot \overline{z})$ is rational, the cycle
\[2{z^{\frac{m}{2}}}^2={pr_m}_{\ast}({\overline{z}}^2)-4\sum \limits_{\underset{i \neq \frac{m}{2}}{0\leq i \leq m}} z^i \cdot z^{m-i}\]
is rational also and we are done with the proof of Lemma 3.7.
\end{proof}

\begin{lemme}
\textit{For any $j=0,...,m$, one can choose
an integral representative $v^j \in CH^m(\overline{Y})$ of $S^j(z^{j}\;mod\;2)$ such that}

\medskip 

\textit{1) the cycle $2v^j$ is rational;}

\medskip

\textit{2) the cycle $v^j$ is rational for odd $j$;}
 
\medskip

\textit{3) for any $k=0,...,m$, one has $s^{k,k}=\sum_{j=0}^k a_j^k v^j$, where $a_j^k$ is the binomial coefficient ${j \choose k-j}$.} 
\end{lemme}

\begin{proof}   
We induct on $j$. For $j=0$, one has $2z^0={pr_m}_{\ast}(h^m\cdot \overline{z})$, so we choose $v^0:=z^0$.
For $j=1$, one has 
\[S^1((\overline{x}\;mod\;2)\circ (\overline{\gamma}\;mod\;2))=\sum_{i=0}^m h^i\times S^1(z^i\;mod\;2)+\sum_{i=0}^m i\cdot h^{i+1}\times (z^i\;mod\;2)\in Ch^{m+1}(\overline{Q}\times \overline{Y}).\] 
In the latter expression, the coordinate on $h^1$, whose $s^{1,1}$ is an integral representative, is $S^1(z^1\;mod\;2)$.
Since, by Lemma 3.5, the cycle $s^{1,1}$ is rational, we choose $v^1:=s^{1,1}$.
Assume that the representatives $v^0, v^1,...,v^{j-1}$ are already built.

\medskip

One has
\[S^j((\overline{x}\;mod\;2)\circ (\overline{\gamma}\;mod\;2))=\sum_{k=0}^j \sum_{i=0}^m  S^k(h^i)\times S^{j-k}(z^i\;mod\;2)\in Ch^{m+j}(\overline{Q}\times \overline{Y}).\]
In the latter expression, the coordinate on $h^j$, whose $s^{j,j}$ is an integral representative, is 
\[a_j^j\cdot S^j(z^j\;mod\;2)+a_{j-1}^j\cdot S^{j-1}(z^{j-1}\;mod\;2)+\cdot \cdot \cdot+a_0^j\cdot S^0(z^0\;mod\;2),\]
where $a_i^j={i \choose j-i}$ for any $0\leq l \leq j$. Therefore, the cycle 
\[v^j:=s^{j,j}-(a_{j-1}^j\cdot v^{j-1}+\cdot \cdot \cdot+a_0^j\cdot v^0)\]
is an integral representative of $S^j(z^j\;mod\;2)$. Moreover, the element
\[2s^{j,j}=2(v^j+a_{j-1}^j\cdot v^{j-1}+\cdot \cdot \cdot+a_0^j\cdot v^0)\]
is rational by Lemma 3.5. By the induction hypothesis, we get that the cycle $2v^j$ is rational.
Furthermore, if $j$ is odd, then the cycle $s^{j,j}$ is rational by Lemma 3.5, and for any even $0\leq l \leq j$, the binomial coefficient $a_l^j$ is even. Therefore, by the induction hypothesis, we get that the cycle $v^j$ is rational. We are done with the proof of Lemma 3.8.  
\end{proof}

Finally, the following lemma will lead to the conclusion.
Denote as $\eta(X)$ the power series $\sum_{i\geq 0} \eta_i \cdot X^i$ in variable $X$, where $\eta_l=(-1)^l{2l+1 \choose l}$.

\begin{lemme}
\textit{For any polynomial $f \in \mathbb{Z}[X]$ of degree $\leq [m/2]$, the linear combination
\[\sum_{j=0}^m g_{m-j}\cdot v^j\]
is the sum of a rational element and an exponent $2$ element, where $g(X)=\sum_l g_l\cdot X^l$ is the power series $f(X)\cdot \eta(X)$.} 
\end{lemme}

\begin{proof}
Let $f=\sum f_k \cdot X^k \in \mathbb{Z}[X]$ be some polynomial of degree $\leq [m/2]$.
Consider the element 
\[\varepsilon:=\sum_{m'=[\frac{m+1}{2}]}^m f_{m-m'}\cdot \delta_{m'}\in CH^m(Y),\]
with $\delta_{m'}$ as in Lemma 3.7. Then, we have 
\[2\overline{\varepsilon}=2\sum_{m'=[\frac{m+1}{2}]}^m f_{m-m'}\sum_{i=0}^{m'}{m'+i+1 \choose i} s^{m'-i,m'-i}.\]
Furthermore, by Lemma 3.8 3), for any $k=0,...,m$, one has $s^{k,k}=\sum_{j=0}^k a_j^k v^j$.
Hence, we get the identity 
\[2\overline{\varepsilon}=2\sum_{m'=[\frac{m+1}{2}]}^m f_{m-m'} \sum_{j=0}^{m'}\left( \sum_{l=0}^{m'-j} {m'+l+1 \choose l}{j \choose m'-l-j} \right)v^j,\]
and the latter identity can be rewritten as 
\[2\overline{\varepsilon}=2\sum_{i=0}^{[\frac{m}{2}]} \sum_{j=0}^{m}f_i \cdot c_{i,j} \cdot v^j,\]
where $c_{i,j}:=\sum_{l=0}^{m-i-j}{m-i+l+1 \choose l}{j \choose m-i-j-l}$.
If $m-i-j<0$, then we have $c_{i,j}=\eta_{m-i-j}=0$. Otherwise -- if $m-i-j\geq 0$ -- we set $k:=m-i-j$, and we have
\[c_{i,j}\equiv \sum_{l=0}^k {-k-j-2 \choose l}{j \choose k-l}\;\;\;\;(mod\;2),\]
which is congruent modulo $2$ to ${-k-2 \choose k}$ by the Chu-Vandermonde Identity (see \cite[Corollary 2.2.3]{AAR}).
Therefore, since ${-k-2 \choose k}\equiv {2k+1 \choose k}\;(mod\;2)$, we get that, for any $i=0,...,[m/2]$ and for any $j=0,...,m$,
\[c_{i,j}\equiv \eta_{m-i-j}\;\;\;\;(mod\;2).\]
Thus, since by Lemma 3.8 1), for any $j=0,...m$, the cycle $2v^j$ is rational, we get that there exists an element $\delta \in CH^m(Y)$ such that
\[2\overline{\delta}=2\sum_{i=0}^{[\frac{m}{2}]} \sum_{j=0}^{m}f_i \cdot \eta_{m-i-j} \cdot v^j=2\sum_{j=0}^{m}g_{m-j}\cdot v^j,\]
where $g(X)=\sum_l g_l\cdot X^l$ is the power series $f(X)\cdot \eta(X)$.
Hence, there exists an exponent $2$ element $\lambda \in CH^m(\overline{Y})$ such that
\[\sum_{j=0}^{m}g_{m-j}\cdot v^j=\overline{\delta}+\lambda,\]
and we are done. 
\end{proof}

We finish now the proof of Theorem 3.1.
Since for any $j=0,...,m$, the cycle $2v^j$ is rational, $v^j$ is rational for all odd $j$, and $v^0$ is an odd multiple of $\overline{y}$, 
it follows from Lemma 3.9 that it is sufficient now to find  a polynomial $f \in \mathbb{Z}[X]$ of degree $\leq [m/2]$
such that the power series $g(X):=f(X)\cdot \eta(X)$ has an odd coefficient at $X^m$ and even coefficients at smaller monomials
of the same parity. By \cite[Lemma 3.13]{RIC}, such a polynomial exists. 
\end{proof}

\section{A stronger version of main theorem}

In this section we continue to use notation introduced in the beginning of Section~3.
The following result is stronger than Theorem 3.1 although its statement is less eloquent.

\medskip

Let $K/F$ be an extension and $X$ be an $F$-variety. In the following proof, an element $x\in CH^{\ast}(X_K)$ 
is called \textit{rational} if it is in the image of the restriction homomorphism $CH^{\ast}(X)\rightarrow CH^{\ast}(X_K)$. 

\medskip

In the same way as before, the following theorem is a generalization of \cite[Proposition 3.7]{RIC} 
(although, putting aside characteristic, Theorem 4.1 is still weaker than the original version 
in the sense that an exponent $2$ element appears in the conclusion). 

\begin{thm}
\textit{Assume that $m<[n/2]$ and $i_1>1$, and let $E/F$ be an extension such that $i_0(Q_E)>m$.
Then, for any $y\in CH^m(Y_{F(Q)})$ there exists $\delta\in CH^m(Y)$ and an exponent $2$ element $\lambda \in CH^m(Y_{E(Q)})$ such that
$y_{E(Q)}={\delta}_{E(Q)}+\lambda$.}
\end{thm}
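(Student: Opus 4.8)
The plan is to relativize the proof of Theorem 3.1 over the extension $E$, measuring rationality in $CH^m(Y_{E(Q)})$ (the image of $CH(Y)$) rather than in $CH^m(\overline{Y})$; the hypothesis $i_0(Q_E)>m$ will play over $E$ the role that total splitness played over $\overline{F}$. First I would fix a cycle $x\in CH^m(Q\times Y)$ mapping to $y$ under the surjection $CH^m(Q\times Y)\twoheadrightarrow CH^m(Y_{F(Q)})$ of \cite[Corollary 57.11]{EKM}. Since $i_0(Q_E)>m$, the classes $l_0,\dots,l_m$ are defined over $E$, so Remark 1.2, applied over $E$ with $r=k=m$ (the $l_i$-coordinates vanishing because $m<[n/2]$), gives a decomposition $x_E=\sum_{i=0}^m h^i\times x^i_E$ with $x^i_E\in CH^{m-i}(Y_E)$. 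Restricting $x_E$ to the generic fibre of $Q_E\times Y_E\to Q_E$ annihilates the terms with $i\geq1$ and yields the identity $y_{E(Q)}=(x^0_E)_{E(Q)}$. Thus $y_{E(Q)}$ is already $E$-rational, and it remains to upgrade this to $F$-rationality modulo an exponent $2$ element of $CH^m(Y_{E(Q)})$; here $x^0_E$ plays the role that $\overline{y}=x^0$ played in Theorem 3.1.

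Next I would reproduce the construction of the proof of Theorem 3.1 in this relative setting. I keep the correspondence $\gamma\in CH_n(Q^2)$ with its decompositions (3.3) and (3.4) over $F$, and I choose the nested subquadrics $P\supset P'$ (of dimensions $m$ and $m'$) \emph{split over $E$}: this is possible precisely because $i_0(Q_E)>m$ forces $\varphi_E$ to contain a hyperbolic subspace of dimension $2i_0(Q_E)>2m$, hence split subforms of dimension up to $m+2$. Setting $z=in^{\ast}(x\circ\gamma)$ and computing as in Theorem 3.1, the bottom coordinates of $z_E$ are the $\alpha_i\,x^i_E$ (the higher and diagonal terms of $\gamma$ not surviving $in^{\ast}$ onto the $m$-dimensional $P$), so that $z^0_E$ restricts over $E(Q)$ to $\alpha_0\,y_{E(Q)}$ with $\alpha_0$ odd. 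I would then rerun Lemmas 3.5, 3.7, 3.8 and 3.9, now reading ``rational'' as ``in the image of $CH(\,\cdot\,)\to CH((\,\cdot\,)_{E(Q)})$''. All the cycle-level inputs are insensitive to the base field: Proposition 2.1, the relation $h^{n-j}=2l_j$, the congruences $c_i(-T_{P'})\equiv\binom{m'+i+1}{i}h^i\pmod2$, the Steenrod formulas and the Chu--Vandermonde identity go through unchanged, while the cycles $l_i$ ($i\leq m$) and the split $P,P'$ that make these arguments produce \emph{$F$-rational} witnesses over $E(Q)$ are available over $E$ thanks to $i_0(Q_E)>m$.

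Finally, with cycles $v^j$ (in $CH^m(Y_{E(Q)})$) satisfying the analogs of Lemma 3.8 --- namely $2v^j$ rational, $v^j$ rational for odd $j$, $v^0$ an odd multiple of $y_{E(Q)}$, and the combinations of Lemma 3.9 equal to a rational element plus an exponent $2$ element --- I would apply \cite[Lemma 3.13]{RIC} to choose $f\in\mathbb{Z}[X]$ of degree $\leq[m/2]$ such that $g=f\cdot\eta$ has odd coefficient at $X^m$ and even coefficients at the smaller monomials of the same parity. The combination $\sum_{j}g_{m-j}v^j$ then exhibits $y_{E(Q)}$ as $\delta_{E(Q)}+\lambda$ with $\delta\in CH^m(Y)$ and $\lambda\in CH^m(Y_{E(Q)})$ of exponent $2$, as required (taking $E=\overline{F}$ recovers Theorem 3.1). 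The principal obstacle is exactly this finer rationality bookkeeping: in Theorem 3.1 the exponent $2$ ambiguity was permitted to live in $CH^m(\overline{Y})$, whereas here every ``rational integral representative'' ($\tilde a^k$, $d_{k,t}$, $s^i$) must be chosen already over $E$ and its rationality detected in $CH^m(Y_{E(Q)})$, so one must verify at each step that the splitting data used over $\overline{F}$ is genuinely present over $E$ --- which is the whole content of the hypothesis $i_0(Q_E)>m$.
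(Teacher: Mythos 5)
Your overall strategy --- rerun the proof of Theorem 3.1 with $\overline{F}$ replaced by $E(Q)$, using $i_0(Q_E)>m$ to make the classes $l_0,\dots,l_m$ and the decomposition $x_{E(Q)}=\sum h^i\times x^i$ available --- is exactly the paper's, and your treatment of the decomposition of $x$, of the identification $x^0=y_{E(Q)}$, and of the final combination via \cite[Lemma 3.13]{RIC} matches the paper. But there is a genuine gap hidden in your assertion that ``all the cycle-level inputs are insensitive to the base field'' and that you can ``keep the decompositions (3.3) and (3.4).'' Those decompositions live over $\overline{F}$, where $Q$ is \emph{completely} split. Over $E(Q)$ you only know $i_0(Q_{E(Q)})>m$ (and $\geq i_1$), not complete splitness, so $(h^0\times h^{i_1-1})\cdot\pi_{E(Q)}$ admits only the partial decomposition of Remark 1.3 with $k=m$: a combination of $h^j\times l_j$ and $l_j\times h^j$ for $j\leq m$ \emph{plus a remainder} $\rho\in Ch^n(Q_{E(Q)}^2)$ whose coordinates on $h^j$ and $l_j$ vanish only for $j\leq m$. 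The analogue of Lemma 3.5(2) requires computing the coordinate $a^{k,j}$ of $S^k(\gamma_{E(Q)}\bmod 2)$ on $h^j$ and showing it vanishes for odd $j$; the explicit formula used in Lemma 3.5 comes from applying $S^k$ term by term to the \emph{full} decomposition (3.3), and over $E(Q)$ the term $S^k(\rho)$ contributes as well. One must therefore prove that the coordinates of $S^k(\rho)$ on $h^j$ still vanish for $j\leq m$ --- this is the paper's Proposition 4.3, proved by an induction using the Cartan formula, \cite[Corollary 78.5]{EKM} and \cite[Proposition 61.10]{EKM}. It is the one genuinely new ingredient of Theorem 4.1 relative to Theorem 3.1, and your proposal does not supply it.

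A second, smaller point you pass over: in Lemma 3.5 the step ``$\tilde a^{k,j}$ divisible by $2$ implies $\tilde a^{k,j}$ rational'' uses that $CH^{n+k-j}(\overline{Q})$ is generated by $l_{j-k}$ with $2l_{j-k}=h^{n+k-j}$. Over $E(Q)$ one must additionally observe that $l_{j-k}$ is defined over $E$ (because $j-k\leq m<i_0(Q_E)$) and that $CH^{n+k-j}(Q_{E(Q)})\to CH^{n+k-j}(Q_{\overline{E(Q)}})$ is an isomorphism, since divisibility is a priori only detected after base change; the paper spells this out. Your remark that the $l_i$ for $i\leq m$ are available over $E$ is the right ingredient, but the injectivity of the restriction on these Chow groups is what actually lets you conclude. (By contrast, your insistence on choosing the subquadrics $P\supset P'$ split over $E$ is unnecessary: the arguments of Lemmas 3.7--3.9 only use powers of $h$ on $P$ and $P'$, which are defined over $F$ regardless.)
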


\begin{proof}
We proceed the same way as in the proof of Theorem 3.1.

Let us fix an element $x \in CH^m(Q\times Y)$ mapped to $y$ under the surjection
\[CH^m(Q\times Y)\twoheadrightarrow CH^m(Y_{F(Q)}).\] 
Since $i_0(Q_E)>m$, by Remark 1.2 (applied with $r=k=m$), the image $x_{E(Q)} \in CH^m(Q_{E(Q)} \times Y_{E(Q)})$ 
of $x$ decomposes as
\[x_{E(Q)}=\sum_{i=0}^m h^i \times x^i\]
where $x^i \in CH^{m-i}(Y_{E(Q)})$ is the coordinate of $x_{E(Q)}$ on $h^i$ (see Definition 1.1).

\medskip

The image of $x$ under the composition
\[CH^m(Q\times Y)\rightarrow CH^m(Q_E\times Y_E)\rightarrow CH^m(Y_{E(Q)})\]
is $x^0$. Therefore, by the commutativity of the diagram
\[\xymatrix
{CH^m(Q_E\times Y_E)  \ar[r] & CH^m(Y_{E(Q)})\\
 CH^m(Q\times Y) \ar[u] \ar[r] & CH^m(Y_{F(Q)}) \ar[u] }
\]     
we get that $x^0=y_{E(Q)}$ and we want to prove that there exists $\delta\in CH^m(Y)$ and an exponent $2$ element $\lambda \in CH^m(Y_{E(Q)})$ such that $x^0={\delta}_{E(Q)}+\lambda$.

\medskip

Let $\pi \in Ch_{n+i_1-1}(Q^2)$ be an element mapped to the $1$-primordial cycle under the restriction homomorphism $Ch^{\ast}(Q)\rightarrow Ch^{\ast}(\overline{Q})$.
By \cite[Proposition 83.2]{EKM}, there is no cycle of type $h^j\times l_{j}$ with odd $j$ appearing in the decomposition of
 $(h^0\times h^{i_1-1})\cdot \pi_{\overline{E}(Q)}\in Ch_{n}(Q_{\overline{E}(Q)}^2)$ (and the cycle $h^0\times l_{0}$ appears).

\medskip

Moreover, since the coefficients near the cycles contained in the decomposition of 
$(h^0\times h^{i_1-1})\cdot \pi_{E(Q)}\in Ch_{n}(Q_{E(Q)}^2)$ given by Remark 1.3 (with $k=m$) 
 do not change when going over $\overline{E}(Q)$,
the cycle $(h^0\times h^{i_1-1})\cdot \pi_{E(Q)}$ can be uniquely written as 
a linear combination of cycles of type $h^j\times l_{j}$ with \textbf{even} $j\in \llbracket 0,m  \rrbracket$
(and the coefficient near $h^0\times l_{0}$ is $1$)
, of cycles of type $l_{j}\times h^j$ (where $j\in \llbracket 0,m  \rrbracket$),  and of a cycle $\rho \in Ch^{n}(Q_{E(Q)}^2)$ 
whose coordinate on $h^j$ (as well as coordinate on $l_j$)
is $0$ for $j\in \llbracket 0,m  \rrbracket$.

\medskip

Thus, fixing a rational integral representative $\gamma_{E(Q)} \in CH_{n}(Q_{E(Q)}^2)$
of $(h^0\times h^{i_1-1})\cdot \pi_{E(Q)}$, we get that the integral coefficient $\alpha_j$ near the cycle $h^j\times l_j$ contained in the 
decomposition of $\gamma_{E(Q)}$ (given by Remark 1.3 , with $k=m$), is even for all odd $j$, and that $\alpha_0$ is odd. 

\medskip

Let $\gamma \in CH_{n}(Q^2)$ mapped to $\gamma_{E(Q)}$ under the restriction homomorphism $CH_n(Q^2)\rightarrow CH_n(Q_{E(Q)}^2)$.
We have the following lemma, whose the statement and the proof are very close to Lemma 3.5.

\begin{lemme}
\textit{For any $i=0,...,m$, one can choose a rational integral representative $s^i\in CH^{m+i}(Q_{E(Q)}\times Y_{E(Q)})$ of 
$S^i((x_{E(Q)}\;mod\;2)\circ (\gamma_{E(Q)}\;mod\;2))$ such that}

\medskip

\textit{1) for any $0\leq j \leq m$, $2s^{i,j}$ is rational , where $s^{i,j}$ is the coordinate of $s^i$ on $h^j$;}

\medskip

\textit{2) for any odd $0\leq j \leq m$, $s^{i,j}$ is rational.}
\end{lemme}

\begin{proof}
We use same notation as those introduced during the proof of Lemma 3.5. One can prove 1) exactly as the same way as Lemma 3.5 1).
We need the following proposition to prove 2).

\begin{prop}
\textit{Let $X$ be a smooth $F$-variety and let $\rho$ be an element of
$Ch(Q\times X)$ such that for any $j=0,...,r$, its coordinate $\rho^j$ on $h^j$ is $0$.
Then, for any integer $k$ and for any $j=0,...,r$, the coordinate of $S^k(\rho)$ on $h^j$ is $0$.}
\end{prop}

\begin{proof}
We induct on $k$. For $k=0$, one has $S^0=\text{Id}$.
Assume that the statement is true till the rank $k$ and let $j\in \llbracket 0,r  \rrbracket$.
By \cite[Corollary 61.15]{EKM} (Cartan Formula), one has
\[S^{k+1}(l_j\cdot \rho)=l_j\cdot S^{k+1}(\rho)+\sum_{i=1}^{k+1}S^i(l_j)\cdot S^{k+1-i}(\rho).\]
Since for any $i=1,...,k+1$, the cycle $S^i(l_j)$ is a multiple of $l_{j-i}$ (see \cite[Corollary 78.5]{EKM}), 
by the induction hypothesis, we get
\[pr_{\ast}(l_j\cdot S^{k+1}(\rho))=pr_{\ast}(S^{k+1}(l_j\cdot \rho)).\]
Furthermore, by \cite[Proposition 61.10]{EKM}, one has
\[S^{k+1}\circ pr_{\ast}(l_j\cdot \rho)=\sum_{i=0}^{k+1}pr_{\ast}(c_{k+1-i}(-T_Q)\cdot S^i(l_j\cdot \rho)),\]
and since $pr_{\ast}(l_j\cdot \rho)=0$, we deduce that
\[pr_{\ast}(l_j\cdot S^{k+1}(\rho))=\sum_{i=0}^k a_i,\]
where $a_i=pr_{\ast}(c_{k+1-i}(-T_Q)\cdot S^i(l_j\cdot \rho))$. We are going to prove that for any $i=0,...,k$, one has $a_i=0$.
Let $i$ be an integer in $\llbracket 0,k  \rrbracket$. Since by \cite[Lemma 78.1]{EKM}, the cycle $c_{k+1-i}(-T_Q)$ is a multiple 
of $h^{k+1-i}$, it suffices to show that $pr_{\ast}(h^{k+1-i}\cdot S^i(l_j\cdot \rho))=0$.

By the Cartan Formula and \cite[Corollary 78.5]{EKM}, the cycle $pr_{\ast}(h^{k+1-i}\cdot S^i(l_j\cdot \rho))$ is a linear combination
of cycle of type $pr_{\ast}(h^{k+1-i}\cdot l_{j-t}\cdot S^{i-t}(x))$, where $t\in \llbracket 0,i  \rrbracket$.
Since by \cite[Proposition 68.1]{EKM}, for any $t=0,...,i$, one has $h^{k+1-i}\cdot l_{j-t}=l_{j-t-(k+1-i)}$, we are done by the induction hypothesis.
\end{proof}

We finish now the proof of Lemma 4.2. Assume that $j$ is odd.
Since by Proposition 4.3, for any $k=0,...,m$, the coordinate of $S^k(\rho)$ on $h^j$ is $0$,
the only fact that we have to explain here to prove 2) 
(i.e what is new compared to the proof of Lemma 3.5) is why the corresponding cycle $\tilde{a}^{k,j}\in CH^{n+k-j}(Q_{E(Q)})$ is rational.

\medskip

For the same reasons as in the proof of Lemma 3.5, the cycle $\tilde{a}^{k,j}\in CH^{n+k-j}(Q_{E(Q)})$ is divisible by $2$.
Moreover, since one has $j-k\leq m<i_0(Q_E)$, 
the cycle $l_{j-k}$ is defined over $E$ and it is consequently defined over $E(Q)$.
Furthermore, since $j-k\leq m<[n/2]$, the group $CH^{n+k-j}(Q_{E(Q)})$ is free with basis $\{l_{j-k}\}$ (as well as the group $CH^{n+k-j}(Q_{\overline{E(Q)}})$) and therefore the restriction homomorphism 
\[CH^{n+k-j}(Q_{E(Q)})\longrightarrow CH^{n+k-j}(Q_{\overline{E(Q)}})\] 
is injective (it is even an isomorphism).
Since $2l_{j-k}=h^{n+k-j}$, we deduce that
any cycle of $CH^{n+k-j}(Q_{E(Q)})$ divisible by $2$ is rational.
Thus, for any $0\leq k \leq m$, the cycle $\tilde{a}^{k,j}$ is rational and we finish as in the proof of Lemma 3.5. 
\end{proof}

Now, one can finish the proof of Theorem 4.1 exactly the same way as the proof of Theorem 3.1 replacing $\overline{F}$ by $E(Q)$.
\end{proof}

\bibliographystyle{acm} 
\bibliography{biblio}
\end{document}